\font\teneufm=eufm10
\font\seveneufm=eufm7
\font\fiveeufm=eufm5
\theoremstyle{plain}
\newtheorem*{lemma*}{Lemma}
\newtheorem*{theorem*}{Theorem}
\newtheorem{theorem}{Theorem}
\newtheorem*{proposition*}{Proposition}
\newtheorem*{corollary*}{Corollary}
\newtheorem{corollary}[subsection]{Corollary}
\theoremstyle{definition}
\newtheorem*{definition*}{Definition}
\newtheorem*{example*}{Example}
\theoremstyle{remark}
\newtheorem*{remark*}{Remark}\newtheorem*{proof*}{proof}
\newcommand{\be}{\begin{equation}}
\newcommand{\ee}{\end{equation}}
 \title[Invariants of commuting matrices]{Invariants of commuting matrices  }
\author{  
 C. Procesi  }  \thanks{I thank Pavel Etingof  for pointing out this problem and some discussions.}
  \keywords{Matrices, invariants.}
  \subjclass[2010]{13A50}
\begin{document}
\maketitle
\section{An open question}\subsection{Introduction}  Consider the space of pairs $(X,Y)$ of $n\times n$  matrices over a field $F$ of characteristic 0, and denote by  $R_n(X,Y)$ the polynomial algebra  on this space.

An intriguing, difficult open question in commutative algebra asks wether the ideal  $I_n$, generated by the entries of the equation $[X,Y]=XY-YX=0$  is a prime ideal in $R_n(X,Y)$. Since it is known that the variety of pairs of commuting matrices is irreducible (cf. \cite{GE}) this means that the ideal of this variety is generated by these natural quadratic equations.

Several attempts to this have been made but, as far as I understand this issue, no conclusive proof has been reached except for small $n\leq 4$ (Wallach private communication).  In \cite{GG}  the authors study a  related problem, that is they study pairs of matrices with commutator of rank 1.  

From this they deduce that, for the case of 2 commuting matrices,  the  ring of invariants  of the natural action by conjugation  on  $R_n(X,Y)/I_n$  is an integral domain.   The purpose of this paper is to comment two  papers, one by Domokos \cite{Do} and one of Francesco Vaccarino \cite{vac} where they prove  the same statement but for the algebra of $m$--commuting matrices, for all $m$. This is in fact a consequence of a stronger theorem \ref{main}, which gives an isomorphism of this ring of invariants with the invariants of diagonal matrices under the symmetric group. The method of proof in the two papers is quite different.  The two papers are clearly independent and although published with some time gap I have been informed that the two preprints appeared on the archive  more or less at the same time.\smallskip

The approach of Vaccarino is through polynomial maps  and the one of Domokos  via classical invariant theory.

I  want to point out some simply related approach which may apply  to classical Lie algebras, since the same question can be asked for all simple Lie algebras (cf. \cite{Ri}, \cite{Te} and \cite{W})  but I have made no  attempt in this direction. It also gives some supplementary information on these algebras.
\subsection{Commuting matrices}
 We work over a field $F$ of characteristic 0, for instance $\mathbb Q$.  Let us fix a positive integer $n$, for all $m$ in fact for $m=\infty$  let us consider the ring of   polynomials in the entries $x^i_{h,k},\ h,k=1,\ldots,n,\ i=1,\ldots, m$ of $m$, $n\times n$ matrix variables $X_i:=(x^i_{h,k})$, modulo the ideal generated by the entries of the   commuting equations $[X_i,X_j]=0$. Denote by $A_{n,m}$ this ring and $A_n=A_{n,\infty}$.  On this ring acts the linear group $GL(m,F)$  by linear transformations on the variables $X_i$ and the group $GL(n,F)$ by conjugation; the two actions commute.
 
 One may consider  in the ring $M_n(A_{n,m})$  the $m$ generic commuting matrices $\xi_i$ images of  the  generic matrices $X_i$. \smallskip

 Consider now the  ring $B_{n,m}$ ($B_m=B_{n,\infty}$) of polynomials in $m$ vector variables $x_i=(x_{i,1},\ldots,x_{i,n})$  which we also consider as  $m$ diagonal matrices.  On this ring acts the linear group $GL(m,F)$  by linear transformations on the variables $x_i$ and the symmetric group $S_n$ by conjugation; the two actions commute. We can also identify $B_{n,m}= B_{n}^{\otimes m}$ where $B_n=F[x_1,\ldots,x_n]$ is the polynomial ring in $n$ variables.
 
 We have a natural {\em restriction}  map  $\pi: A_{n,m}\to B_{n,m}$  compatible under the action  of  $GL(m,F)$ and $S_n$, which induces thus  a natural map  of invariants
 \begin{equation}\label{comi}
\tilde\pi: A_{n,m}^{GL(n,F)}\to B_{n,m}^{S_n}=(F[x_1,\ldots,x_m]^{\otimes n})^{S_n}.
\end{equation}
\begin{theorem}\label{main}[Domokos,  Vaccarino] The map $\tilde\pi$  is an isomorphism.

\end{theorem}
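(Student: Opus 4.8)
The plan is to establish the isomorphism in two halves: injectivity of $\tilde\pi$ and surjectivity. For injectivity, the key observation is that the restriction map $\pi\colon A_{n,m}\to B_{n,m}$ sends the generic commuting matrices $\xi_i$ to the diagonal matrices $\mathrm{diag}(x_{i,1},\dots,x_{i,n})$, so it captures the information seen on the "diagonal locus" of commuting tuples. Since the variety of commuting $m$-tuples is irreducible (by \cite{GE}), and the subvariety consisting of tuples of simultaneously diagonalizable matrices with distinct joint eigenvalues is dense in it, any invariant of $A_{n,m}$ that restricts to $0$ on the diagonal matrices must vanish on a dense set of the commuting variety, hence is nilpotent; here one needs the ring of invariants $A_{n,m}^{GL(n,F)}$ to be reduced, which follows because $A_{n,m}$ modulo its nilradical is the coordinate ring of the (reduced, irreducible) commuting variety and taking $GL(n,F)$-invariants is exact in characteristic $0$. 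This gives injectivity of $\tilde\pi$.

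For surjectivity I would argue that $A_{n,m}^{GL(n,F)}$ is generated by traces $\mathrm{tr}(\xi_{i_1}\xi_{i_2}\cdots\xi_{i_r})$ of words in the generic commuting matrices — this is the first fundamental theorem for matrix invariants, valid in characteristic $0$ — and then compute the image of such a trace under $\tilde\pi$. Because the $\xi_i$ commute, one may reorder each word and write it as a product of powers, so the trace descends to a symmetric function: $\tilde\pi\bigl(\mathrm{tr}(\prod_i \xi_i^{a_i})\bigr)=\sum_{k=1}^n \prod_i x_{i,k}^{a_i}$, which is exactly the $S_n$-invariant power sum in the $n$ "blocks" of variables. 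It is classical that these power sums, as the $a_i$ and the words vary, generate the full ring $(F[x_1,\dots,x_m]^{\otimes n})^{S_n}$ of $S_n$-invariants of $n$ vector variables (the multisymmetric functions), so $\tilde\pi$ is surjective.

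The main obstacle is the density/reducedness input needed for injectivity: one must be confident that no nonzero $GL(n,F)$-invariant of $A_{n,m}$ is killed by restriction to diagonals. The subtlety is that $A_{n,m}$ itself may fail to be reduced (indeed whether $I_n$ is prime is the open question mentioned above), so the argument must be run on the level of invariants, using that $(\,\cdot\,)^{GL(n,F)}$ is an exact functor in characteristic $0$ and that the reduced commuting variety is irreducible with the diagonalizable tuples dense. One then needs that a $GL(n,F)$-invariant regular function on the commuting variety vanishing on all semisimple commuting tuples is identically zero, which is immediate from density. Once this is in place, injectivity and surjectivity combine to give that $\tilde\pi$ is an isomorphism, and as a byproduct one learns that $A_{n,m}^{GL(n,F)}$ is itself an integral domain, isomorphic to the ring of multisymmetric functions.
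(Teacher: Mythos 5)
Your surjectivity half is sound and close in spirit to the paper's: $A_{n,m}^{GL(n,F)}$ is generated by traces of words in the $\xi_i$ (the first fundamental theorem for matrix invariants passes to the quotient because taking invariants of a reductive group is exact in characteristic $0$), the image of such a trace is a polarized power sum, and these classically generate the multisymmetric functions. The paper gets surjectivity instead from a leading-monomial argument, and independently exhibits a one-sided inverse $\bar D$ of $\tilde\pi$ built from the determinant via Roby's theorem on multiplicative polynomial laws.

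The injectivity half has a genuine gap --- in fact two. First, the irreducibility result of \cite{GE} concerns pairs only; for $m\geq 3$ the variety of commuting $m$-tuples is in general reducible (Guralnick, for $n$ large), so the simultaneously diagonalizable tuples need not be dense and an invariant vanishing on them need not vanish on the whole commuting variety. Second, and more fundamentally, even where density holds your argument only shows that $\ker\tilde\pi$ is contained in $J\cap A_{n,m}^{GL(n,F)}$, where $J$ is the nilradical. Your claim that this intersection is zero ``by exactness of invariants'' does not follow: exactness gives $A^{GL}/J^{GL}\cong (A/J)^{GL}$, which says nothing about $J^{GL}$ vanishing. The statement $J^{GL}=0$ (equivalently, that the invariant ring is reduced) is precisely the Corollary the paper deduces \emph{from} the theorem, so assuming it is circular; the entire difficulty is that $A_{n,m}$ itself is not known to be reduced. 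The paper closes this gap with a counting argument that your proposal lacks: after Aronhold polarization one only needs the multilinear parts, where the Cayley--Hamilton trace identity shows every invariant is a combination of products of at most $n$ traces; for commuting matrices such products $t_\Lambda$ are indexed by partitions $\Lambda$ of $[m]$ into at most $n$ blocks, the same set that indexes a basis of the multilinear part of $B_{n,m}^{S_n}$. A spanning set of the source mapping onto a basis of the target of the same cardinality forces injectivity. Some such upper bound on the size of the invariant ring is indispensable and is missing from your argument.
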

\begin{proof}  The proof from \cite{Do} is similar to the proof I want to present, the one in \cite{vac} is based on the Theory of polynomial maps.

One first considers the isomorphism $F[x_1,\ldots,x_m]\to F[\xi_1,\ldots,\xi_m]$ to the ring  of generic commuting matrices,  when we compose this map with the determinant  we have a polynomial map, homogeneous of degree $n$: 
\begin{equation}\label{detm}
D:F[x_1,\ldots,x_m]\to F[\xi_1,\ldots,\xi_m]\stackrel{\det}\longrightarrow   A_{n,m}^{GL(n,F)}
\end{equation} This map is {\em multiplicative} in the sense that $D(ab)=D(a)D(b)$, hence by a general theorem of Roby,  \cite{Roby1} it factors through a homomorphism of the $n^{th}$ symmetric tensors  $\bar D:(F[x_1,\ldots,x_m]^{\otimes n})^{S_n}\to A_{n,m}^{GL(n,F)}$. So the point is  just to prove that this map is inverse to the restriction. The composition $\bar\pi\circ \bar D$ is clearly identity, so $\bar D$ is injective and $\bar \pi$ surjective the issue is in the other order  $\bar D\circ \pi$.                                                           In \cite{vac}  this is proved by showing a correspondence between generators of the two algebras.  For matrix invariants it is known (even  before imposing the commutative law), that invariants are generated by the coefficients of the characteristic polynomials of primitive monomials in the variables $X_i$, see \cite{Am}, a similar statement holds for the symmetric group  and one verifies the correspondence.

Here I want to point out  a slightly different approach, which  consists in applying the classical Arhonold method, of reducing to multilinear elements. As I will point out at the end this also gives some normal form for the invariants.
Remark that $\tilde\pi$ is  $GL(m,F)$  equivariant for all $m$, thus it is enough to prove the statement for $m=\infty$ and, by the classical method of Arhonold, it is enough to prove that $\tilde\pi$ is an isomorphism when restricted to  the multilinear elements.

We thus have to gather some information on these elements.  

Let us start with the multilinear elements of  $B_{n,m}^{S_n}$.  The multilinear elements  of $B_{n,m}$ have as basis the monomials
$$x_{1,i_1}\ldots x_{m,i_n};\quad 1\leq i_j\leq n,\ \forall j.$$
Thus these monomials can be thought of as maps  from $[m]:=\{1,2,\ldots,m\}$ to $[n]$.

These monomials are permuted by the symmetric group so that the space of  multilinear elements of  $B_{n,m}^{S_n}$ has as basis the orbits of $S_n$ on such a space of functions.  

To any function $f:[m]\to  [n]$, is associated the partition of  $[m]$ into its fibers  which is a partition of   $[m]$ into at most $n$ subsets.  This partition is independent of the orbit.  Conversely given 
 a partition $\Lambda$ of   $[m]$ into at most $n$ subsets, we have a natural ordering of these sets  by associating to each subset its minimal element  and then ordering them according to the minimal element as $S_1, S_2,\ldots,S_k$. We then associate to this the function $f_\Lambda:[m]\to  [n]$ which takes the value $i$ on $S_i$. In its orbit under $S_n$ this function is {\em leading} in a suitable lexicographic order.  Hence it is easily seen that in this way we have established a 1--1 correspondence between maps $f:[m]\to  [n]$ up to symmetry and decompositions $\Lambda$ of $[m]$ into at most $n$ subsets.  
 
 \medskip
 
 Let us now turn our attention to  $A_{n,m}^{GL(n,F)}$. By classical invariant theory  multilinear invariants of  $m$ matrix variables  are spanned by the functions $\phi_\sigma$ where $\sigma\in S_m$  is a permutation and 
 $$\phi_\sigma(x_1,x_2,\dots,x_m)=tr(x_{i_1}x_{i_2}\dots
x_{i_h})tr(x_{j_1}x_{j_2}\dots x_{j_k})\dots tr(x_{s_1}x_{s_2}\dots x_{s_m})$$ there the various factors correspond to the decomposition into cycles of $\sigma$ (see for instance \cite{P7}).

When we are working with $n\times n$ matrices we have a basic formal identity  (equivalent to the Cayley--Hamilton identity)
$$ \sum_{\sigma\in S_{n+1}}\epsilon_\sigma \phi_\sigma(x_1,x_2,\dots,x_{n+1})=0.$$
We deduce from this  that  $$\phi_1(x_1,x_2,\dots,x_{n+1})=\prod_{i=1}^{n+1}tr(x_i)=- \sum_{\sigma\in S_{n+1},\ \sigma\neq 1}\epsilon_\sigma \phi_\sigma(x_1,x_2,\dots,x_{n+1})$$ and then each summand of the R.H.S. of this equality is the product of at most $n$ factors of the form $tr(M)$  for $M$ some monomial.\smallskip

Therefore  by a repeated use of this identity we deduce that every   invariant for $n\times n$ matrices is a linear combination  of   products with at most  $n$ factors of the form $tr(M)$  for $M$ some monomial.\smallskip

We apply this in particular to the multilinear invariants  for commuting matrices, in this case   the invariant $tr(M)$ for a multilinear monomial $M$  depends only on the variables appearing in $M$ and not the order in which they appear.

Given a set $S$ of indices  we may define thus an element $tr_S\in A_n$
$$tr_S:= tr(\prod_{i\in S}X_i).$$
Thus, given a  partition $\Lambda:=\{S_1,\ldots,S_k\},\ k\leq n$ of   $[m]$ into at most $n$ subsets $S_i$  and considering the  invariant  $t_\Lambda:=\prod_it_{S_i}$  we have that these multilinear elements span linearly all multilinear invariants.  Thus we can prove that they form a basis and that $\tilde\pi$ is an isomorphism as soon as we show that $\tilde\pi$ is surjective (proved already in \cite{W}).\smallskip

For this order  lexicographically the monomials  $x_{1,i_1}\ldots x_{m,i_n}$ which can  be just thought of as words of length $m$ in the letters $1,2,\ldots,n$.  Observe that the leading term of  $t_\Lambda$ is the monomial associated to $f_\Lambda$  and the claim follows.
\end{proof}
\smallskip

Let us draw some consequences, let $J$ be the nil ideal of  $A_n$ clearly it is stable under both groups  and we have
\begin{corollary}
$J$  does not contain any  element which is invariant under  $GL(n,F)$.
\end{corollary}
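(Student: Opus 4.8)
The plan is to derive this directly from Theorem \ref{main}. The point is that the target of the isomorphism $\tilde\pi$, namely $B_{n,\infty}^{S_n}$, is a subring of the polynomial ring $B_{n,\infty}$ (in the notation of \eqref{comi}, $(F[x_1,\ldots,x_m]^{\otimes n})^{S_n}$ with $m=\infty$), which is an integral domain and in particular reduced. Hence $A_n^{GL(n,F)}$ is itself reduced, and a reduced ring contains no nonzero nilpotent element; since every element of $J$ is nilpotent, the claim follows.

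To spell this out, I would take $z\in J$ fixed by $GL(n,F)$ and show $z=0$. Because $z$ lies in the nil ideal, $z^N=0$ for some $N$. Applying the restriction homomorphism $\pi\colon A_n\to B_{n,\infty}$ gives $\pi(z)^N=\pi(z^N)=0$, and since $B_{n,\infty}$ is a polynomial ring it has no nonzero nilpotents, so $\pi(z)=0$. As $z$ is $GL(n,F)$-invariant, $\pi(z)$ is nothing but its image $\tilde\pi(z)$ under the induced map on invariants; by Theorem \ref{main} that map is injective, whence $z=0$.

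I do not expect any genuine obstacle: the whole content is already packaged in Theorem \ref{main}. The only items to keep in mind are that the restriction of the generic commuting matrices to diagonal matrices is an honest ring homomorphism (this is implicit in the construction of $\pi$, and is what makes it kill nilpotents) and the triviality that $\pi$ and $\tilde\pi$ agree on invariants. One can in fact phrase the conclusion more strongly: $A_n^{GL(n,F)}$ is an integral domain, being isomorphic to the subring $B_{n,\infty}^{S_n}$ of a polynomial ring, which also recovers the irreducibility-flavoured statement recalled in the introduction.
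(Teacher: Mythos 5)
Your argument is correct and is exactly the intended one: the paper states the corollary as an immediate consequence of Theorem \ref{main}, since a $GL(n,F)$-invariant element of the nil ideal $J$ would be a nilpotent element of $A_n^{GL(n,F)}\cong B_{n,\infty}^{S_n}$, which is reduced (indeed a domain). Your spelled-out version via $\pi(z)^N=0$ and injectivity of $\tilde\pi$ is a faithful unpacking of this, so there is nothing to add.
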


We should notice that if one is interested in describing these algebra,  in detail, one method is to describe them as representation of $GL(m)$ ($m$ is the number of copies) and this is done by describing each homogeneous component of degree $k$ as a direct sum of Schur functors $S_\lambda(F^k)$ where $\lambda\vdash k$ is a partition of $k$.

 The description of homogeneous component of degree $k$ passes through the description of  the multilinear part of   degree $k$ for the case of $k$ copies as representation of the symmetric group  implies the description of  the homogeneous part of the same degree as representation  of the linear group  $GL(k)$.

In fact  by the Theory of Schur functors,  a partition $\lambda\vdash k$  defines a Schur functor  $S_\lambda(W)$ on vector spaces homogeneous of degree $k$.  When we apply this to an  $k$--dimensional space $W=F^k$  with a prescribed basis  we may consider inside  the space $M_\lambda(F^k)$ of multilinear elements, which by definition are the  invariants for the torus of  diagonal matrices  in that basis with determinant 1. It then is true that $M_\lambda(F^k)$  is the irreducible representation of the symmetric group $S_k$, permuting that basis, associated to the same partition $\lambda$.

Of course  $S_\lambda(W)=0$  if the dimension of $W$ is strictly smaller than the height of $\lambda$ so when $m<k$ not all the representations appearing contribute.  In our case it would remain  to describe the decomposition into irreducible representations  of the permutation representations, which we have seen describe the multilinear invariants, of decompositions $\Lambda$ of $[m]$ into at most $n$ subsets. The orbits  of this permutation action  are parametrized by partitions of $m$ of height $\leq n$, to each such partition corresponds the permutation representations given on cosets of the corresponding Young subgroup.  These are well understood,    there is an extensive literature, see for instance \cite{Sagan}.

 \vskip5pt
\footnotesize{
\vskip20pt
Dipartimento di Matematica, Sapienza Universit\`a di Roma, P.le A. Moro 2,
00185, Roma, Italy; 
\par\noindent
Email address:
\par\noindent{\tt procesi@mat.uniroma1.it}
}

 \end{document}